\newlength{\bibitemsep}\setlength{\bibitemsep}{.005\baselineskip plus .005\baselineskip minus .005\baselineskip}
\newlength{\bibparskip}\setlength{\bibparskip}{0pt}
\let\oldthebibliography\thebibliography
\renewcommand\thebibliography[1]{%
  \oldthebibliography{#1}%
  \setlength{\parskip}{\bibitemsep}%
  \setlength{\itemsep}{\bibparskip}%
}
\newtheorem{thm}{Theorem}
\newtheorem{lemma}[thm]{Lemma}
\newcommand{\mE}{{\mathbb E}}
\newcommand{\mR}{{\mathbb R}}
\newcommand{\cP}{{\mathcal P}}
\newcommand{\cQ}{{\mathcal Q}}
\definecolor{grey}{rgb}{0.6,0.6,0.6}
\definecolor{lightgray}{rgb}{0.97,.99,0.99}
\title{\LARGE \bf An optimal control approach to particle filtering on Lie groups}
\author{Bo Yuan, Qinsheng Zhang, and Yongxin Chen
\thanks{This work was supported by NSF under grant 1942523 and 2008513.}
\thanks{Bo Yuan, Q. Zhang and Y.\ Chen are with the School of Aerospace Engineering,
Georgia Institute of Technology, Atlanta, GA; {\tt\small \{byuan48,qzhang419,yongchen\}@gatech.edu}}}
\begin{document}

\maketitle
\thispagestyle{empty}
\pagestyle{empty}

\begin{abstract}

We study the filtering problem over a Lie group that plays an important role in robotics and aerospace applications. We present a new particle filtering algorithm based on stochastic control. In particular, our algorithm is based on a duality between smoothing and optimal control. Leveraging this duality, we reformulate the smoothing problem into an optimal control problem, and by approximately solving it (using, e.g., iLQR) we establish a superior proposal for particle smoothing. Combining it with a suitable designed sliding window mechanism, we obtain a particle filtering algorithm that suffers less from sample degeneracy compared with existing methods. The efficacy of our algorithm is illustrated by a filtering problem over SO(3) for satellite attitude estimation.  
\end{abstract}

\section{INTRODUCTION}


The filtering problem to estimate the posterior distribution of the state of a dynamic system from noisy observations plays an essential role in applications in control engineering~\cite{AndMoo12}, robotics~\cite{DurBai06}, autonomy~\cite{BusVarLoh20} etc. For dynamics that has a Euclidean state space, many algorithms such as the extended Kalman filter (EKF), the unscented Kalman filter (UKF), particle filtering, have been proposed for the filtering task. In many applications in robotics and aerospace, the state space of the dynamical system is a manifold instead. The above filtering algorithms cannot directly be extended to the manifold setting due to the differences between the Euclidean spaces and general manifolds.

Designing filters directly evolving on manifolds gives better estimation of posteriors in most cases. For instance, for the agent's pose evolving in the three-dimensional space, the state variable belongs to the special orthogonal group, a Lie group SO(3). On this manifold, better convergence results from the Invariant Extended Kalman Filters have been shown in ~\cite{ZhaWuSon17,KoSonkim18,BarBon16,HasAboAbi21}.
However, these filters are sensitive to the initial state estimation and only locally stable. Alternatively, Lie Group Variational Filters~\cite{IzaSan16,SanIzaSam14}
derived from the d’Alembert’s Principle are shown to be almost-globally asymptotically stable. 


We present a particle filtering for dynamical systems on Lie groups \cite{ZhaTagMeh17}.
Our algorithm relies on the duality of filtering and stochastic optimal control,
which reformulates the smoothing problem into an optimal control problem~\cite{MitNew03,kimMeh20}.
If one can find the exact optimal solution for it, then the resulting filtering algorithm can achieve its theoretical maximum effectiveness. In practice, the computation of optimal control problems is demanding, thus we utilize a suboptimal solution instead
\cite{BouThe18a}. The particles in the resulting filtering/smoothing algorithm is generated by simulating the dynamics under suboptimal control. Path integral based on the Girsanov theorem~\cite{Oks03} and the Kallianpur-Striebel formula~\cite{Van07} is used to compute the weights of these particles. This work extends the Path Integral Particle Filter~\cite{ZhaTagChe21} to the Lie group setting.
We show with numerical experiments that our control-based filtering algorithm with the suboptimal control signal is more effective than that with a zero control signal. The latter is effectively the standard Sequential Importance Resampling.  

\section{notation and preliminaries}\label{sec:background}
In this section, we present the notation and background knowledge on Lie groups~\cite{SolDerAtc18} as well as the duality between smoothing and optimal control.

\subsection{Lie Groups and Lie Algebras}

We denote a Lie group by $G$, a set that is both a group and a $d$-dimensional differentiable manifold. Let $T_{g}G$ be the tangent space of $G$ at the element $g$. The associated Lie algebra $\mathfrak{g}$ is defined as $T_{e}G$ with $e$ being the identity element.

A vector field $V$ is a smooth mapping by which each element $g \in G$ corresponds to a vector $v \in T_{g}G$. Let $L_{g}$ denote the left multiplication by $g$, i.e., for all $h\in G$, $L_g(h) = gh$. It can be shown that $L_{g}$ is a diffeomorphism between $G$ and itself. Due to this diffeomorphism, for each vector $a \in \mathfrak{g}$, we can define a vector field $V_{a}$ that assigns each $g \in G$ to $dL_{g}(a)$, where $dL_{g}(a)$ is the shorthand notation of $(g\rho)^{\prime}(0)$ with $\rho(t)$, a curve on $G$, satisfying $\rho(0) = e$ and $\rho^{\prime}(0) = a$. To proceed, we denote the exponential mapping as $\exp$: $\mathfrak{g} \rightarrow G$. For every $a \in \mathfrak{g}$, $\exp(a) = \gamma(1)$, where $\gamma(t) \in G$ satisfies $\gamma^{\prime}(t) = dL_{\gamma(t)}(a)$ and $\gamma(0)=e$. As a local diffeomorphism between $G$ and $\mathfrak{g}$, the exponential mapping plays a crucial role in connecting them. In addition, we point out a Lie algebra $\mathfrak{g}$, as a vector space, is homeomorphic to $\mathbb{R}^{d}$. We denote the corresponding coordinate of $a \in \mathfrak{g}$ by $a^{\vee}$.


In many practical applications, Lie groups are assumed to be matrix Lie groups. A matrix Lie group is a closed subgroup of $\mathbb{GL}(n)$, the group consists of all $n \times n$ invertible matrices. In the case of matrix Lie groups, group multiplication corresponds to matrix multiplication, $dL_{g}(a)$ is the standard matrix multiplication for $g$ and $a$, i.e., $dL_{g}(a) = ga$, and the exponential mapping is matrix exponential. In what follows, we adopt $\exp$ to represent both the exponential mapping on  Lie groups and matrix exponential, as the exact meaning will be clear from context. Among these matrix Lie groups, the special orthogonal group SO(3), also named as 3D rotation group, is what we particularly care about. Formally, SO(3) is the group of all $ 3\times 3$ real orthogonal matrices with determinant 1. The Lie algebra of SO(3), denoted by $\mathfrak{so}(3)$, is all $3 \times 3$ skew-symmetric matrices. 

\subsection{Smoothing and Stochastic Optimal Control on $\mathbb{R}^d$}\label{section:smooth_control_R}
Recently, the duality between smoothing and optimal control inspires a class of filtering algorithms on $\mR^{d}$~\cite{ZhaTagChe21},~\cite{MitNew03},~\cite{kimMeh20},~\cite{Tod08}. The connection between the two areas is established as follows.
Consider a continuous dynamic system evolving on $\mathbb{R}^d$ in which the dynamics are represented by stochastic differential equations
	\begin{subequations}\label{eq:filteringdyn1}
	\begin{eqnarray}
	\label{eq:filteringdynx}	
	    dX_t &=& b(t,X_t) dt + \sigma(t,X_t) dW_t,\quad X_0\sim \nu_0, \\
	\label{eq:filteringdyny}	
	    dY_t &=& h(t,X_t)dt + \sigma_B dB_t,\quad Y_0 = 0.
	\end{eqnarray}
	\end{subequations}
Here, we denote the state variables by $X_t \in \mathbb{R}^d$ and the raw measurements by $Y_t \in \mathbb{R}^m$. The initial distribution of  $X_t$ is $\nu_0$. Moreover, $W_t$ and $B_t$ are two independent multidimensional Wiener processes. The task of the smoothing problem over $[0,T]$ is to estimate the posterior distribution of every $X_\tau$ given all the observations $\{Y_t, 0 \leq t\leq T\}$, i.e., $P(X_{\tau}|Y_t, 0 \leq t\leq T)$ for every $0 \leq \tau \leq T$. Consider another dynamics system with control signal $u_t$,
    \begin{equation}\label{eq:filteringdyn2}
        d\tilde X_t = b(t,\tilde X_t) dt + \sigma(t,\tilde X_t) (u_tdt+dW_t),\quad \tilde X_0\sim \pi_0,
    \end{equation}
where $\tilde X_t$ stands for the state variable under control signal $u_t$, and $\tilde X_0$ follows $\pi_0$. Note that the control signal  relies on both $t$ and $\tilde X_t$. We denote it by $u_t$ for brevity.

Denote the probability measure induced by dynamic \eqref{eq:filteringdynx} and dynamic \eqref{eq:filteringdyn2} by $\cP$ and $\tilde\cP$, respectively. We also let $\mathcal{Q}^Y$ be the posterior measure induced by \eqref{eq:filteringdyn1}. It has been shown that this smoothing problem can be reformulated to a stochastic optimal control problem by minimizing the KL divergence between $\tilde\cP$ and $\mathcal{Q}^Y$~\cite{ZhaTagChe21},~\cite{MitNew03},~\cite{kimMeh20},~\cite{Tod08}, i.e., 
    \begin{equation}\label{eq:KLdiv}
        \min_{\tilde\cP}~ {\rm KL} (\tilde\cP \| \mathcal{Q}^Y): = \mE_{\tilde\cP}(\log\frac{d\tilde\cP}{d\cQ^Y}).
    \end{equation}
It is worthwhile to summarize the derivation here, as our central proof is a generalization of it. The prementioned objective functions could be decomposed into two terms,
	\begin{equation}\label{eq:derive_R}
    \mE_{\tilde\cP}(\log\frac{d\tilde\cP}{d\cQ^Y}) =
    \mE_{\tilde\cP}(\log\frac{d\tilde\cP}{d\cP})
    -\mE_{\tilde\cP}(\log\frac{d\cQ^Y}{d\cP}) .
	\end{equation}
Under certain regularity conditions, one can adopt Pathwise Kallianpur-Striebel formula (See Proposition 1.4.2. in~\cite{Van07}) which gives
\begin{equation}
    \begin{aligned}\label{eq:KS_R}
    \frac{d\cQ^Y}{d\cP} \propto 
    ~&\exp\left\{-\int_0^T \frac{1}{2\sigma_B^2}\|h(t, \tilde X_t)\|^2 dt \right. \\
    &\hspace{-0.4cm}\left. -\int_0^T\frac{1}{\sigma_B^2}Y_t^{\prime} dh(t,\tilde X_t)+\frac{1}{\sigma_B^2}Y_T^{\prime}h(T,\tilde X_T)\right\}.
    \end{aligned}
\end{equation}

The other term can be dealt with by Girsanov theorem (See Theorem 8.6.5. in~\cite{Oks03}), and one has
\begin{align}
   \log\frac{d\tilde\cP}{d\cP} = \int_0^T \frac{1}{2}\|u_t\|^2 dt + u_t^{\prime}dW_t+ \log\frac{d\pi_0}{d\nu_0},
\end{align}
which infers
\begin{align}
		{\rm KL} (\tilde\cP\,\|\,\cP) = \mE \left\{\int_0^T \frac{1}{2}\|u_t\|^2 dt\right\}+ {\rm KL} (\pi_0\,\|\,\nu_0).
\end{align}
It follows the objective function of the derived optimal control problem is as follows
\begin{eqnarray}\nonumber
     \min_{u,\pi_0} &&\hspace{-0.65cm}{\rm KL} (\pi_0\,\|\,\nu_0)\!+\!\mE_{\tilde\cP}\left\{\int_0^T [\frac{1}{2}\|u_t\|^2+\frac{1}{2\sigma_B^2}\|h(t, \tilde X_t)\|^2] dt \right. \\ 
     &&\hspace{0.2cm}\left. +\int_0^T\frac{1}{\sigma_B^2}Y_t^{\prime} dh(t,\tilde X_t)-\frac{1}{\sigma_B^2}Y_T^{\prime}h(T,\tilde X_T)\right\}\label{eq:controlsmoothing}
\end{eqnarray}

In principle, once the optimal control strategy $u^{\star}_t$ and the optimal initial distribution $\pi_0^{\star}$ of \eqref{eq:controlsmoothing} are found, one can sample trajectories from dynamics \eqref{eq:filteringdyn2}, and the empirical distribution of these particles forms an approximation of the solution to smoothing problems.

\section{Path Integral Particle Filtering on Lie groups}
In this section, we present our main results on an optimal control inspired approach to particle filtering/smoothing over Lie groups.
\subsection{Smoothing as Control Problems for Lie Groups}
Consider the following dynamics for filtering on Lie groups
\begin{subequations}\label{eq:Lie_dynamics}
    \begin{align}
    		dg_t &= dL_{g_t}(\xi_{t})dt,\label{eq:LieDynamics1}\\
            d\xi_t^{\vee} &= b(t,g_t,\xi_t) dt + \sigma_W dW_t, \label{eq:LieDynamics2}\\
            dY_t &= h(t,g_t,\xi_t)dt + \sigma_B dB_t, \label{eq:LieDynamics3}\\
            (g_0,\xi_0^{\vee}) &\sim \nu_0, \quad Y_0 = 0.
    \end{align}
\end{subequations}
Here the velocity variable $\xi_t \in \mathfrak{g}$ stands for elements in Lie algebras, and the rotation variable $g_t \in G$ means elements in Lie groups. The output variable $Y_t \in \mathbb{R}^m$ represents raw observations, and the joint variable $(g_t,\xi_t^{\vee})$ stands for the state of the dynamic system. In addition, $W_t$ and $B_t$ represent two multidimensional independent Wiener processes as Section \ref{section:smooth_control_R}. We also denote the initial distribution of $(g_t,\xi_t^{\vee})$ by $\nu_0$, where $\nu_0$ is a well-defined distribution defined $G \times \mathfrak{g}$. 
The aim of filtering problem is to approximate the posterior distribution of $(g_{\tau},\xi_{\tau}^{\vee})$ given the observation up to $\tau$, i.e., $P\left((g_{\tau},\xi_{\tau}^{\vee}) | Y_t, 0\leq t \leq \tau \right)$. Note that the dynamic \eqref{eq:LieDynamics1} implies that, assuming $(g_0,\xi_0)$ is fixed, the state variable $\{(g_{\tau}, \xi_{\tau}^{\vee}),\,  0 \leq \tau \leq t\}$ is fully determined by  $\{\xi_{\tau}^{\vee},\,  0 \leq \tau \leq t\}$. Hence, loosely speaking, the measure of $(g_t,\xi_t^{\vee})$ is the marginal measure of $\xi_t^{\vee}$. For simplicity, we assume the parameters of \eqref{eq:Lie_dynamics} are sufficiently regular so that the solution to this system exists. Actually, the dynamics driven by \eqref{eq:LieDynamics1} and \eqref{eq:LieDynamics2} satisfy the Dol\'eans-Dade
and Protter’s equations, and the existence conditions of solutions for this type of equations have been studied in~\cite{MetPel80}. For \eqref{eq:LieDynamics3}, we assume $h_t$ is integrable almost surely.

In section \ref{section:smooth_control_R}, we have shown the smoothing problem on $\mathbb{R}^d$ can be reformulated as a stochastic control problem. In fact, the same framework can be applied on Lie groups. Let $\cP$  be the measure  of $(g_t,\xi_t^{\vee})$ given by \eqref{eq:LieDynamics1} and \eqref{eq:LieDynamics2}. Denote $\cQ^{Y}$ to be its posterior distribution. Moreover, we denote the state of the following It\^{o} process with control signal $u_t$ by $(\tilde g_t,\tilde\xi_t)$ and the associated measure by $\tilde\cP$.
\begin{subequations} \label{eq:Lie_controlled}
    \begin{align}
    		d\tilde g_t &= dL_{\tilde g_t}(\tilde \xi_t)dt,\\
            d\tilde\xi_t^{\vee} &= b(t,g_t,\tilde \xi_t) dt + \sigma_W (u_tdt+dW_t),\\
            (\tilde g_0,\tilde \xi_0) &\sim \pi_0.
    \end{align}
\end{subequations}

The next lemma reveals that the Kullback–Leibler divergence of $\tilde P $ and $\cP$ of \eqref{eq:Lie_dynamics} is in the same form as the $\mathbb{R}^d$ case. 
\begin{lemma}\label{lemma:Lie1}
Assume the Novikov's condition holds, i.e., ${\mE}[\exp(\frac{1}{2}\int_0^T \|u_t\|^2dt)] < \infty$, then
\begin{equation}\label{eq:Lie1}
		{\rm KL} (\tilde\cP\,\|\,\cP) = \mE \left\{\int_0^T \frac{1}{2}\|u_t\|^2 dt\right\}+ {\rm KL} (\pi_0\,\|\,\nu_0).
\end{equation}
\end{lemma}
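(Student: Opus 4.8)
The plan is to reduce the statement to the Euclidean computation recalled in Section~\ref{section:smooth_control_R} by isolating the single source of randomness in \eqref{eq:Lie_dynamics}. The key observation is that, conditionally on the initial data $(g_0,\xi_0^\vee)$, the group component $g_t$ is a finite-variation process and a deterministic measurable functional of the Lie-algebra path $\{\xi_s^\vee:\,0\le s\le t\}$ through the ordinary differential equation \eqref{eq:LieDynamics1}; in particular $g_t$ is adapted to the filtration generated by $\xi^\vee$ and carries no Brownian component of its own. Consequently the change of measure from $\cP$ to $\tilde\cP$ is supported entirely on the $\mathbb R^d$-valued It\^o equation \eqref{eq:LieDynamics2}, whose drift under $\tilde\cP$ differs from its drift under $\cP$ by precisely the additive term $\sigma_W u_t$, with the same diffusion coefficient in both cases. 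First I would make this precise by disintegrating
\[
\frac{d\tilde\cP}{d\cP}\;=\;\frac{d\pi_0}{d\nu_0}(g_0,\xi_0^\vee)\cdot\frac{d\tilde\cP_{\mid (g_0,\xi_0^\vee)}}{d\cP_{\mid (g_0,\xi_0^\vee)}},
\]
noting that if $\pi_0\not\ll\nu_0$ then both sides of \eqref{eq:Lie1} are $+\infty$ and there is nothing to prove.

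Next I would apply Girsanov's theorem to the $\xi^\vee$-equation exactly as in the $\mathbb R^d$ case. Under the stated Novikov condition the associated Girsanov density is a genuine martingale, so the conditional Radon--Nikodym derivative above equals $\exp\{\int_0^T u_t^\prime\,dW_t+\tfrac12\int_0^T\|u_t\|^2\,dt\}$, where $W$ denotes the driving Wiener process of the controlled system \eqref{eq:Lie_controlled}. The fact that the drift $b(t,g_t,\tilde\xi_t)$ is path-dependent through $g_t$ is harmless, since Girsanov only requires the drift shift to be adapted, which it is. Combining with the initial-condition factor gives
\[
\log\frac{d\tilde\cP}{d\cP}\;=\;\int_0^T\tfrac12\|u_t\|^2\,dt+\int_0^T u_t^\prime\,dW_t+\log\frac{d\pi_0}{d\nu_0},
\]
which is formally identical to the $\mathbb R^d$ identity recalled in Section~\ref{section:smooth_control_R}.

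Finally I would take expectations under $\tilde\cP$. Novikov's condition also forces $\mE_{\tilde\cP}\big[\int_0^T\|u_t\|^2\,dt\big]<\infty$, so $t\mapsto\int_0^t u_s^\prime\,dW_s$ is a true $\tilde\cP$-martingale and the stochastic-integral term has zero mean; meanwhile $\mE_{\tilde\cP}\big[\log\tfrac{d\pi_0}{d\nu_0}\big]={\rm KL}(\pi_0\,\|\,\nu_0)$ because the time-$0$ marginal of $\tilde\cP$ is $\pi_0$. Since ${\rm KL}(\tilde\cP\,\|\,\cP)=\mE_{\tilde\cP}[\log\tfrac{d\tilde\cP}{d\cP}]$, this yields \eqref{eq:Lie1}.

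I expect the only genuine obstacle to be the first step: setting up the filtered probability space on $G\times\mathfrak g$ carefully enough to justify rigorously that the measure change lives entirely on the $\mathbb R^d$ factor, that $g_t$ contributes neither to the Radon--Nikodym derivative nor to an extra martingale term, and that $\cP$ and $\tilde\cP$ are indeed mutually absolutely continuous (for which one invokes Girsanov together with Novikov on the $\xi^\vee$ component and $\pi_0\ll\nu_0$). Once that reduction is in place, the remainder is a verbatim repetition of the $\mathbb R^d$ derivation.
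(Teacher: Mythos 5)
Your proposal is correct and follows essentially the same route as the paper's proof: reduce to the Euclidean Girsanov computation by observing that, given the initial condition, the group path $g_{[0,T]}$ is a deterministic functional of the Lie-algebra path $\xi^\vee_{[0,T]}$, so the Radon--Nikodym derivative factors into the initial-distribution ratio times the standard $\mathbb{R}^d$ Girsanov density for the $\xi^\vee$-equation under Novikov's condition, after which taking expectations under $\tilde\cP$ gives \eqref{eq:Lie1}. Your write-up is somewhat more careful (explicit disintegration over the initial condition, the $\pi_0\not\ll\nu_0$ case, and the martingale/zero-mean justification for the stochastic integral), but the underlying argument is the same.
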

\begin{proof}
In the standard Gisanov theorem, an elementary assumption is the dynamic is the state variable is in $\mathbb{R}^d$, which seems to be violated in our problem. However, the $\xi_t^{\vee}$-marginal measure satisfies this assumption and the Novikov's condition holds, so one can compute the Kullback–Leibler divergence of marginals as follows. Let $\cP_{\xi}$ and $ \tilde P_{\xi}$ be the $\xi_t^{\vee}$-marginals for $\cP$ and and $ \tilde P$, respectively. Then, assuming the initial values are given we have
\begin{align}
   \log\frac{d\tilde\cP_{\xi}}{d\cP_{\xi}} = \int_0^T \frac{1}{2}\|u_t\|^2 dt + u_t^{\prime}dW_t. \nonumber
\end{align}
As the sample path of $g_{[0,T]}$ is exactly a function of $\xi_{[0,T]}$, it follows
\begin{align}
   \log\frac{d\tilde P}{d\cP} &= \log\frac{d\tilde\cP_{\xi}}{d\cP_{\xi}} + \log\frac{d\pi_0}{d\nu_0}\nonumber \\
   &= \int_0^T \frac{1}{2}\|u_t\|^2 dt + u_t^{\prime}dW_t + \log\frac{d\pi_0}{d\nu_0},\nonumber
\end{align}
which yields the Kullback–Leibler divergence in Lemma \ref{lemma:Lie1}. Note that the Novikov's condition is already satisfied in most practical applications, so we can assume it is true as most previous literatures do.
\end{proof}
\begin{lemma}\label{lemma:Lie2}
Assume $h_t$ is an adapted and continuously differentiable process and the condition, $\int_0^T \|h_s\|^2ds < \infty$ for any $T$, holds almost surely. Then we have the Radon–Nikodym derivative, 
\begin{equation}
    \begin{aligned}\label{eq:Lie2}
    \frac{d\cQ^Y}{d\cP} \propto 
    ~&\exp\left\{-\int_0^T \frac{1}{2\sigma_B^2}\|h(t, \tilde g_t, \tilde \xi_t)\|^2 dt \right. \\
 &\hspace{-1cm}\left. -\int_0^T\frac{1}{\sigma_B^2}Y_t^{\prime} dh(t,\tilde g_t, \tilde \xi_t)+\frac{1}{\sigma_B^2}Y_T^{\prime}h(T,\tilde g_T, \tilde \xi_T)\right\}.
\end{aligned}
\end{equation}

\end{lemma}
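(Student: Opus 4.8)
The plan is to generalize the Euclidean derivation leading to \eqref{eq:KS_R}, using the same device that made Lemma~\ref{lemma:Lie1} work. The potential difficulty is that the pathwise Kallianpur--Striebel formula (Proposition~1.4.2 of~\cite{Van07}) is stated for a Euclidean-valued signal, whereas the state $(g_t,\xi_t^{\vee})$ lives on $G\times\mathfrak g$. The resolution is that \eqref{eq:LieDynamics1} is a pathwise ODE driven by $\xi$, so given the initial value $g_0$ the curve $g_t$ is a deterministic functional of $\xi_{[0,t]}^{\vee}$; conditioning on the initial data $(g_0,\xi_0^{\vee})$ therefore reduces everything to the $\mathbb R^{d}$-valued diffusion $\xi^{\vee}$ solving \eqref{eq:LieDynamics2}, observed through \eqref{eq:LieDynamics3}.

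Concretely, I would write $g_t=\Phi_t(\xi_{[0,t]}^{\vee};g_0)$ for the flow of \eqref{eq:LieDynamics1}, so that $h_t:=h(t,g_t,\xi_t)$ is, conditionally on $(g_0,\xi_0^{\vee})$, an adapted functional of the Euclidean process $\xi^{\vee}$; the hypotheses that $h$ is adapted, regular, and $\int_0^T\|h_s\|^2\,ds<\infty$ a.s.\ are exactly the integrability conditions under which Proposition~1.4.2 of~\cite{Van07} applies to the signal--observation pair $(\xi^{\vee},Y)$ and yields, for the $\xi^{\vee}$-marginals $\cP_\xi,\cQ^Y_\xi$ of $\cP,\cQ^Y$,
\begin{equation*}
\frac{d\cQ^Y_\xi}{d\cP_\xi}\ \propto\ \exp\!\left\{\frac{1}{\sigma_B^2}\int_0^T h_t'\,dY_t-\frac{1}{2\sigma_B^2}\int_0^T\|h_t\|^2\,dt\right\},
\end{equation*}
the normalization depending on $Y$ only. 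Since the likelihood on the right is a functional of the whole trajectory (including $\xi_0^{\vee}$) and the initial law $\nu_0$ is common to $\cP$ and $\cQ^Y$, integrating out the initial data introduces no extra term, and via the measurable graph map $\xi^{\vee}\mapsto(g,\xi^{\vee})=(\Phi(\xi^{\vee};g_0),\xi^{\vee})$ the same identity holds between $\cQ^Y$ and $\cP$ on the joint path space, with $h_t=h(t,g_t,\xi_t)$.

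It then remains to put the exponent into the pathwise form of \eqref{eq:Lie2}. Since the martingale part of $Y$ is carried by $B$ while that of $h_t$ comes only from $W$ (through $\xi$), and $W\perp B$, we have $[h,Y]\equiv0$, so integration by parts together with $Y_0=0$ gives $\int_0^T h_t'\,dY_t=Y_T'h_T-\int_0^T Y_t'\,dh_t$; substituting produces exactly \eqref{eq:Lie2} as an identity of functionals on path space. Finally, since $\tilde\cP\ll\cP$ by Lemma~\ref{lemma:Lie1}, this functional is defined $\tilde\cP$-a.s.\ and may be read along the controlled trajectory $(\tilde g_t,\tilde\xi_t)$, which is the form stated.

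The conceptual obstacle is the legitimacy of invoking Kallianpur--Striebel (and the underlying Girsanov change of measure) for a manifold-valued signal; this is handled, as in Lemma~\ref{lemma:Lie1}, by conditioning on the initial value and reducing to the Euclidean process $\xi^{\vee}$. The one genuinely technical point is the integration-by-parts step: under the stated regularity one must verify that $h_t$ is a semimartingale with $[h,Y]\equiv0$ (so no It\^o correction term appears) and that $\int_0^T Y_t'\,dh_t$ is well defined, both of which follow from $W\perp B$ together with the adaptedness and smoothness of $h$; the a.s.\ $L^2$ bound on $h$ is what makes the It\^o integral $\int_0^T h_t'\,dY_t$, and hence the whole identity, meaningful.
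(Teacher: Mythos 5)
Your proposal is correct, but it resolves the key difficulty differently from the paper. The paper applies the pathwise Kallianpur--Striebel formula \emph{directly} to the joint state $(g_t,\xi_t)$: Proposition~1.4.2 of~\cite{Van07} only requires the signal to be an adapted c\`adl\`ag process in a Polish space, and the continuous diffusion $(g_t,\xi_t)$ on the product space $G\times\mathfrak g$ already satisfies this, so no reduction to $\mathbb R^d$ is needed; the hypothesis $\int_0^T\|h_s\|^2ds<\infty$ a.s.\ then gives $d\cQ^Y/d\cP\propto\exp\{-\int_0^T\tfrac{1}{2\sigma_B^2}\|h_t\|^2dt+\int_0^T\tfrac{1}{\sigma_B^2}h_t'\,dY_t\}$, and integration by parts (justified in the paper by $h_t$ being adapted and continuously differentiable, hence of finite variation) yields the pathwise form \eqref{eq:Lie2}. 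You instead import the Lemma~\ref{lemma:Lie1} device: condition on the initial data, view $g_t=\Phi_t(\xi^{\vee}_{[0,t]};g_0)$ as a deterministic flow, apply the Euclidean Kallianpur--Striebel formula to $(\xi^{\vee},Y)$, and lift back through the graph map. That route works (the likelihood is a trajectory functional and $B\perp(W,g_0,\xi_0)$), but it is unnecessary given the Polish-space generality of the reference, and it forces you to be careful that the ``signal'' must include $g_0$, not just the $\xi^{\vee}$-marginal; also note the phrase ``the initial law $\nu_0$ is common to $\cP$ and $\cQ^Y$'' is literally false (the posterior's time-zero marginal is not $\nu_0$) --- what saves you is simply that Bayes' formula puts the entire density into the likelihood, so no extra initial-law term appears. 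On the final step your justification of integration by parts via $[h,Y]\equiv 0$ (martingale parts of $h$ and $Y$ driven by the independent noises $W$ and $B$) is actually more robust than the paper's appeal to continuous differentiability of $h_t$, and covers the semimartingale case the lemma's hypothesis sidesteps.
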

\begin{proof}
In the original derivation of the Pathwise Kallianpur-Striebel formula (Proposition 1.4.2. in~\cite{Van07}), the dynamics of state variables are assumed to be an adapted c$\grave{a}$dl$\grave{a}$g process in a Polish space. In our setting, the state variable $(g_t,\xi_t)$ follows an continuous diffusion process in the Cartesian product space of $g_t$ and $\xi_t$, which does satisfy the prementioned assumption. One can check the assumption that $\int_0^T \|h_s\|^2ds < \infty$ for any $T$, holds almost surely, is the main assumption in~\cite{Van07}, which yields
\begin{equation}
    \begin{aligned}
    \frac{d\cQ^Y}{d\cP} \propto 
    &\exp\left\{-\int_0^T \frac{1}{2\sigma_B^2}\|h(t, \tilde g_t, \tilde \xi_t)\|^2 dt \right. \\
 &\hspace{-1cm}\left. +\int_0^T\frac{1}{\sigma_B^2}h^{\prime}(t,\tilde g_t, \tilde \xi_t) dY_t \right\}. \nonumber
\end{aligned}
\end{equation}
It is easy to verify that with the assumption that $h_t$ is adapted and continuously differentiable, the integral $\int h_t^{\prime}dY_t$ satisfies the stochastic integration by parts formula. Therefore, the Radon–Nikodym derivative in our case is in the same form as the one in~\cite{Van07}. 
\end{proof}

By plugging \eqref{eq:Lie1} and \eqref{eq:Lie2} into ${\rm KL} (\tilde\cP \| \mathcal{Q}^Y)$ in \eqref{eq:KLdiv}, the corresponding stochastic optimal control problem becomes
\begin{equation}
    \begin{aligned}\label{eq:costLie}
	\min_{u,\pi_0} &~ {\mE}\!\left\{\!\int_{0}^T\!\!\left [L(t,\tilde g_t,\tilde \xi_t)\!+\!\frac{1}{2}\|u_t\|^2\right] dt \!+\! \Phi(\tilde g_T,\tilde \xi_T)\right\} \\
	&+ {\rm KL} (\pi_0\,\|\,\nu_0),
\end{aligned}
\end{equation}
where
	\begin{subequations}\label{eq:ginference}
	\begin{align}\label{eq:ginference1}
		L(t,x,y)dt \!&=\! \frac{1}{2\sigma_B^2}\|h(t,x,y)\|^2 dt\!+\!\frac{1}{\sigma_B^2}Y_t^{\prime}dh(t,x,y) \\
        \Phi(x,y) &= -\frac{1}{\sigma_B^2}Y_T^{\prime}h(T,x,y). \label{eq:ginference2}
	\end{align}
	\end{subequations}
In~\cite{ZhaTagChe21}, the authors point out the optimization problem of this form can be solved separately. The optimal signal $u_t^{\star}$ depends only on $t$ and $X_t$, thus one can calculate $u_t^{\star}$ initially. It has been shown that $u_t^{\star}$ can be solved in the similar manner for $\mathbb{R}^d$~\cite{BouThe18b}.
After the computation of  $u_t^{\star}$, 
one can find that the optimal initial distribution $\pi_0^\star$ is the posterior distribution of $(g_0,\xi_0^{\vee})$ given all the observations $\{Y_t, 0\leq t \leq T\}$ which resembles the $\mR^{d}$ setting~\cite{ZhaTagChe21}.Thus, if the solution of \eqref{eq:costLie} exists, the minimum value is 0.
For the sake of completeness, we summarize our previous result in the following theorem.
\begin{thm}
Assume a solution to \eqref{eq:costLie} exists. 
and the conditions of Lemma \ref{lemma:Lie1} and Lemma \ref{lemma:Lie2} hold. For the dynamic system modeled by \eqref{eq:Lie_dynamics} and \eqref{eq:Lie_controlled}, denote the related optimal solution to \eqref{eq:costLie} by $u^{\star}_t$ and $\pi_0^{\star}$. Then the posterior measure ${\cQ^Y}$ is equal to the measure of dynamics $\tilde\cP$ driven by $u^{\star}_t$ and $\pi_0^{\star}$.
\end{thm}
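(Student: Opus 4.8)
The plan is to show that the objective in \eqref{eq:costLie} equals the Kullback--Leibler divergence ${\rm KL}(\tilde\cP\,\|\,\cQ^Y)$ up to an additive constant independent of the decision variables $(u,\pi_0)$, and then read the theorem off from the variational characterization of relative entropy. First I would reproduce, now on the path space of $(g_t,\xi_t^\vee)$, the decomposition \eqref{eq:derive_R}, namely ${\rm KL}(\tilde\cP\,\|\,\cQ^Y)=\mE_{\tilde\cP}(\log\tfrac{d\tilde\cP}{d\cP})-\mE_{\tilde\cP}(\log\tfrac{d\cQ^Y}{d\cP})$, which is legitimate because the chain rule for Radon--Nikodym derivatives holds on this space and all three measures are mutually absolutely continuous under the stated hypotheses. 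Substituting Lemma~\ref{lemma:Lie1} for the first term and Lemma~\ref{lemma:Lie2} for the second --- the normalizing factor hidden in the ``$\propto$'' of \eqref{eq:Lie2} is exactly the observation evidence and does not depend on $(u,\pi_0)$ --- and recognizing the running and terminal costs $L$ and $\Phi$ of \eqref{eq:ginference}, one gets
\[
  {\rm KL}(\tilde\cP\,\|\,\cQ^Y)=\mE\!\left\{\int_0^T\!\big[L+\tfrac12\|u_t\|^2\big]dt+\Phi\right\}+{\rm KL}(\pi_0\,\|\,\nu_0)+c,
\]
with $L=L(t,\tilde g_t,\tilde\xi_t)$, $\Phi=\Phi(\tilde g_T,\tilde\xi_T)$, and $c$ a control-independent constant. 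Hence a pair $(u,\pi_0)$ minimizes \eqref{eq:costLie} if and only if it minimizes ${\rm KL}(\tilde\cP\,\|\,\cQ^Y)$.

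Next I would argue the \emph{realizability} statement that $\cQ^Y$ itself belongs to the family of controlled laws \eqref{eq:Lie_controlled}, so that $\inf_{u,\pi_0}{\rm KL}(\tilde\cP\,\|\,\cQ^Y)=0$. Conditioning the prior dynamics \eqref{eq:Lie_dynamics} on $\{Y_t,\,0\le t\le T\}$ leaves the group equation \eqref{eq:LieDynamics1} untouched, since $g_{[0,T]}$ is a pathwise functional of $\xi_{[0,T]}^\vee$ and the initial value; on the $\xi^\vee$-marginal the conditioning acts as a Doob $h$-transform, adding to the drift a term $\sigma_W u^\star_t$ with $u^\star_t$ an adapted feedback (a function of $(t,g_t,\xi_t)$ once the data path $Y$ is frozen) and leaving the diffusion coefficient $\sigma_W$ unchanged; and the law of $(g_0,\xi_0^\vee)$ under $\cQ^Y$ is by construction the posterior of the initial state, which I take as $\pi_0^\star$. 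Therefore $\cQ^Y$ coincides with the law $\tilde\cP$ generated by \eqref{eq:Lie_controlled} under this $(u^\star,\pi_0^\star)$, and for that pair the relative entropy, hence the objective \eqref{eq:costLie}, attains its minimum.

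Finally, since a minimizer of \eqref{eq:costLie} exists by hypothesis, and since by the previous paragraph the minimal value of ${\rm KL}(\tilde\cP\,\|\,\cQ^Y)$ is $0$, any optimal pair $(u^\star,\pi_0^\star)$ must satisfy ${\rm KL}(\tilde\cP\,\|\,\cQ^Y)=0$; by strict positivity of relative entropy off the diagonal this forces $\tilde\cP=\cQ^Y$, which is the assertion of the theorem.

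I expect the realizability step to be the crux. The delicate points there are confirming that the $Y$-conditioned $\xi^\vee$-dynamics is again an It\^o diffusion of the prescribed form with the \emph{same} diffusion coefficient --- so only the drift shifts and a bona fide control $u^\star$ can be extracted --- verifying that this $u^\star$ is admissible, in particular adapted and satisfying Novikov's condition so that Lemma~\ref{lemma:Lie1} applies to it, and checking that the group component introduces no extra obstruction, which again rests on the observation ``$g$ is a function of $\xi$'' used in the proof of Lemma~\ref{lemma:Lie1}. Everything else --- assembling the two lemmas and invoking the KL characterization --- is bookkeeping.
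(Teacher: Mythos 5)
Your proof is correct and follows essentially the same route as the paper: combine Lemma~\ref{lemma:Lie1} and Lemma~\ref{lemma:Lie2} to identify the objective \eqref{eq:costLie} with ${\rm KL}(\tilde\cP\,\|\,\cQ^Y)$ up to a constant, note that the minimum value is zero because $\cQ^Y$ is itself realizable as a controlled law, and conclude from ${\rm KL}(\tilde\cP\,\|\,\cQ^Y)=0$ that $\tilde\cP=\cQ^Y$. The only difference is that you spell out the realizability step (the Doob $h$-transform argument) that the paper merely asserts in the text preceding the theorem, which is a welcome addition rather than a deviation.
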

\begin{proof}
The form of the corresponding stochastic optimal control problem follows from Lemma \ref{lemma:Lie1} and Lemma \ref{lemma:Lie2}. Since the dynamics $\tilde\cP$ driven by $u^{\star}_t$ and $\pi_0^{\star}$ satisfy ${\rm KL} (\tilde\cP\,\|\,{\cQ^Y}) = 0$, one can recover the posterior measure ${Q^Y}$ through sampling from $\tilde\cP$.
\end{proof}

\subsection{Algorithms for Smoothing and Filtering Problems}

It is worthwhile to point out that the computation cost of  $u_t^{\star}$ and $\pi_t^{\star}$ is demanding. Therefore, we adopt importance sampling with suboptimal solutions. The procedure is essentially the same with the one in~\cite{ZhaTagChe21}, as, in Lemma 1, we already show Gisanonv theorem is applicable in this case. Consider $K$ trajectories $\{g_i^k,\left(\xi_i^k\right)^{\vee}\}$ generated by suboptimal conditions and denote the value of $S_u(0,T)$ over the $k$th trajectory by  $S_u^k(0,T)$. Then the posterior distribution of $(g_t, \xi_t^{\vee})$ is approximated by $\sum_{k=1}^K  w^k \delta_{(g_t^k,\left(\xi_t^k\right)^{\vee})}$, where
\begin{equation}\label{eq:Sw}
	w^k \propto  \frac{d\nu_0}{d\pi_0}(g_0^k,\left(\xi_0^k\right)^{\vee}) \exp[-S_u^k(0,T)],
\end{equation}
and
\begin{equation}\label{eq:Su}
\begin{aligned}
    S_u(t,s) &= \int_{t}^{s} \left[L(\tau,\tilde g_\tau,\tilde\xi_\tau)+\frac{1}{2}\|u_\tau\|^2\right]d\tau \\
    &+ \int_t^s u_{\tau}^{\prime}dW_{\tau}+ \Phi(\tilde g_T,\tilde \xi_T).
\end{aligned}
\end{equation}

In what follows, we divide $[0,T]$ as $0=t_0<t_1<t_2 < \ldots < t_{N}=T $ where for the integer $i$, $t_i-t_{i-1}=\Delta t$, a fixed time horizon. If $\Delta t $ is sufficiently small, then the discretization of \eqref{eq:LieDynamics1} can be written with the exponential mapping. We also apply the one-step Euler-Maruyama approximation to \eqref{eq:LieDynamics2}, which yields the following discrete simulation step,
\begin{subequations}\label{eq:discretetLie}
    \begin{align}
    		g_{i+1} &= g_{i} \exp (\Delta t\xi_i)\\
            \xi_{i+1}^{\vee} &= \xi_i^{\vee} + b(i,g_i,\xi_i)\Delta t  + \sigma_W (u_i\Delta t+\sqrt{\Delta t}\epsilon_i)
    \end{align}
\end{subequations}
Here, $g_0$ and $\xi_0$ are sampled from $\pi_0$, and $\epsilon_i$ follows the standard Gaussian distribution. Note that we replace $t_i$ by $i$ for the ease of presentation. For the observation model, using the same approach, we have
\begin{equation}\label{eq:discretetLie2}
    \begin{aligned}
            Y_{i+1} &= Y_i + h(i,g_i, \xi_i)\Delta t  + \sigma_B \sqrt{\Delta t}\delta_i,
    \end{aligned}
\end{equation}
with $\delta_i \sim N(0,\mathbb{I})$ being a Gaussian random variable.

Next, consider the discrete version of trajectory cost \eqref{eq:Sw}  as follows,
\begin{equation}
    \begin{aligned}
	\hspace{-0.29cm} S_u(0,T) 
      & \approx \sum_{i=0}^{N-1} \left[\frac{1}{2}\|u_i\|^2+\frac{1}{2\sigma_B^2}\|h(i,\tilde g_i,\tilde \xi_i)\|^2\right]\Delta t \\ 
      & - \sum_{i=0}^{N-1}\frac{1}{\sigma_B^2}h(i,\tilde g_i,\tilde \xi_i)^{\prime}(Y_{i+1}-Y_{i}) + u_i^{\prime}\sqrt{\Delta t}\epsilon_i ,
 \end{aligned}
\end{equation}

 where the first equation follows from integration by parts. 

The control-based particle smoothing algorithm  on Lie groups is shown in Algorithm \ref{alg:smoothing}.
\begin{algorithm}
    \caption{Control-based Particle Smoothing on Lie Groups}
    \label{alg:smoothing}
    \begin{algorithmic}
    \STATE \textbf{Input: } 
    $b, \sigma_W, h, \sigma_B$ : Model formulation and parameters
    \STATE $\nu_0$ : Initial distribution
    \STATE $N$: Total number of time points
    \STATE $K$: Total number of particles
    \STATE \textbf{Output:} $\{g^k,\xi^k\}, \{w^k\}$: Smoothing results
    \STATE
    $L,\Phi  \leftarrow$ Formulate the corresponding optimal control problem by \eqref{eq:ginference} and \eqref{eq:costLie}
    \STATE
    $u_t$ and $\pi_0$ $\leftarrow$ Find the suboptimal solutions of \eqref{eq:costLie}
    \FOR{$k \leftarrow 1,\cdots, K$}
            \STATE $\{g_i^k,\xi_i^k\}$ $\leftarrow$ Sample the $k$th trajectory at time point $i$ following the dynamic \eqref{eq:Lie_controlled} with suboptimal conditions  $u_t$ and $\pi_0$
         \STATE $\{w^k\}$ $\leftarrow$ Compute the weights of trajectories by \eqref{eq:Sw}
    \ENDFOR
\end{algorithmic}
\label{alg:A1}
\end{algorithm}
The key difference of smoothing  and filtering is that filtering refers to a length-varying time window while smoothing corresponds to a fixed time window. A naive but computationally-expensive implementation for filtering problem is to recursively run Algorithm \ref{alg:A1} over the increasing time interval. Specifically, for every time point $\tau$, run the smoothing algorithm over $[0,\tau]$ and simulate particles $\{g_{\tau}^k,\xi_{\tau}^k\}_{k=1}^K$ to approximate $P\left((g_{\tau},\xi_{\tau})|Y_t, 0\leq t \leq \tau\right)$.
To reduce the computational demand, we combine the sliding window mechanism proposed in~\cite{ZhaTagChe21} to improve the efficiency of filters. 

The essential component of sliding window algorithm is to involve a sliding window parameter $H$. For the $j$-th time point, if $j\leq H$, then simply run Algorithm 1 over the window $[t_0,t_j]$. Otherwise, solve the smoothing problem over $[t_{j-H},t_j]$ where the initial distribution of $t_{j-H}$, $\nu_{t_{j-H}}$, is recursively updated by the following formulas.
Assume
\begin{equation}\label{eq:priorpre}
		\nu_{t_{j-H-1}} \approx \sum_{k=1}^K w_p^k X_{t_{j-H-1}}^k,
\end{equation}
then 
\begin{equation}\label{eq:prior}
	\nu_{t_{j-H}} \approx \sum_{k=1}^K \hat{w}_p^k X_{t_{j-H}}^k.
\end{equation}
Here $X_{t_{j-H}}^k$ represents the $k$th particle at $t_{j-H}$ sampled by the smoothing algorithm over $[t_{j-H-1},t_{j-1}]$, and $\hat{w}_p^k$ is the normalized weights of $w_p^k \exp[-S_u^k(t_{j-H-1},t_{j-H})]$, where $\exp[-S_u^k(t_1,t_2)]$ stands for the trajectory cost over $[t_1,t_2]$. In the standard particle filtering, a resampling step is performed to reduce weight degeneracy~\cite{DouBri06}. Therefore, we utilize the resampling step as in~\cite{ZhaTagChe21}, which leads to Algorithm \ref{alg_2}, the following control-based filtering algorithm on Lie groups for the case $j>H$. 

\begin{algorithm} 
    \caption{Control-based Particle filtering on Lie Groups}
    \begin{algorithmic}\label{alg_2}
    \STATE \textbf{Input: } 
    $b, \sigma_W, h, \sigma_B,\nu_0, N, K$
    \STATE $j$: Current time point
    \STATE $H$: The Length of the sliding window
    \STATE $\bar{\gamma}$: Threshold of performing resampling
    \STATE $\sum_{k=1}^K w_p^k X_{t_{j-H}}^k$: Prior information at the previous step
    \STATE \textbf{Output:} $\{X^k\}, \{w^k\}$: Filtering results
    $\{w^k_p,X^k_p\}$: Prior information for the next sliding window

    $L,\Phi  \leftarrow$ Formulate the corresponding optimal control problem by \eqref{eq:ginference} and \eqref{eq:costLie} over $[t_{j-H},t_j]$
    \FOR{$k \leftarrow 1,\cdots, K$}
        \STATE $X^k$ $\leftarrow$ Sample the $k$th trajectory  by \eqref{eq:discretetLie} over $[t_{j-H},t_j]$ initialized by $X_{t_{j-H}}^k$
        \STATE $\exp[-S_u^k(t_{j-H},t_j)]$ $\leftarrow$ Compute the weight of $k$th trajectory by \eqref{eq:Sw} over $[t_{j-H},t_j]$.
        \STATE $w^k \propto w^k_p\exp[-S_u^k(t_{j-H},t_j)]$ 
        \STATE $X_p^k = X_{j-H+1}^k$
         \STATE $\{w_p^k\}  \propto \left\{w^k_p\exp[-S_u^k(t_{j-H},t_{j-H+1})]\right\}$
    \ENDFOR 
    \STATE Calculate the effective ratio $\gamma$ of the collection of weights $\{w^k\}_{k=1}^K$ by $\gamma = \frac{1}{K\sum_{i=1}^Kw_i^2}$.
     \IF{$\gamma < \bar{\gamma}$}
                \STATE Sample $\{X_p^k\}$ from \\
                ${\rm multinomial} (\{X_p^k\}, \{w_p^k\}\exp[-S_u^k(t_{j-H+1},t_{j})])$
                \STATE $\{w_p^k\} \propto \exp[S_u^k(t_{j-H+1},t_{j})]$
                \ENDIF
\end{algorithmic}
\end{algorithm}

\section{Experiments}

To demonstrate the efficacy of our algorithm, we consider the following position and velocity estimation problem of a 3D rigid body whose state variable is the tangent bundle of SO(3). To begin with, the system dynamics are given by
\begin{subequations}\label{eq:LieDynamics}
    \begin{align}
        		&dg_t =g_t\xi_tdt \\
                &d\xi_t^{\vee} = \mathbb{M}^{-1}(\mathbb{M}\xi_t^{\vee} \times \xi_t^{\vee} + \mathbb{H}\sigma u_t) dt +  \mathbb{M}^{-1}\mathbb{H}\sigma dW_t\\
                &(g_0,\xi_0) \sim \pi_0
    \end{align}
\end{subequations}

In the above dynamics, the variable $g_t \in $ SO(3) represents the rotation matrix, and the column vector $\xi_t^{\vee} \in \mR^3$ is the body-fixed velocity. We also denote the control signal by $u_t \in \mR^{3}$. Note that for SO(3), $dL_{g_t}(\xi_{t})$ can be simplified to matrix multiplication of $g_t$ and $\xi_t$. The parameter $\mathbb{M} \in \mR^{3 \times 3} $ is the inertia tensor, and $\mathbb{H} \in \mR^{3 \times 3} $ represents the control torques. In addition $\sigma$ is used to modulate the magnitudes of the control signal and noise signal. Here the symbol $\times$ stands for the cross product in the three-dimensional vector space. For $\mathfrak{so}(3)$, we define ${\vee}: \mathfrak{g} \rightarrow \mR^{3}$ by
$$
\begin{bmatrix}
0 & -x_3 & x_2\\
x_3 & 0 & -x_1\\
-x_2 & x_1 & 0
\end{bmatrix}^{\vee}
=
\begin{bmatrix}
x_1 \\
x_2 \\
x_3
\end{bmatrix}.
$$
The deterministic version of the prementioned forward dynamics was initially proposed in~\cite{Cro84} and was later adopted in~\cite{BouThe18a} as an underlying dynamics in optimal control.

For the observation model, we assume the sensors have access to both rotation matrix and the velocity. The rotation matrix $g_t$ is measured by the  accelerometer whose output is  $-g_t^{\prime}r_g$, where $r_g$ is a unit vector aligned with the gravity. Similarly, we also measure $g_t$ with the magnetometer which gives $g_t^{\prime}r_b$ with $r_b$ being a unit vector aligned with the magnetic field (See~\cite{MahHamPfl08} for more discussion of the two sensors). 
In summary, our observation model is  
\begin{equation}
    dY_t = h(t,g_t,\xi_t)dt + \sigma_B dB_t,
\end{equation}
where $h(t,g_t,\xi_t) = (-g_t^{\prime}r_g,g_t^{\prime}r_b,\xi_t^{\vee}) \in \mR^9$.

To define an initial distribution $\pi_0$ that is easy to sample from, we first generate a sample $x \sim N(0,\Sigma_{6 \times 6})$. Then let the first three components $(x_1,x_2,x_3)^{\prime}$ be $\xi_0^{\vee}$ and $g_0$ is given by  $\exp\left([x_4,x_5,x_6]^{\prime}\right)$. This approach is consistent with defining the randomness on SO(3) by latent variables in a vector space and then mapping them to SO(3) via suitable mappings~\cite{FalDeHPim19}.
In this way, the joint distribution of $(g_0,\xi_0^{\vee})$ is well-defined. 

We assume the suboptimal initial distribution $\nu_0$ is always the given one in the prior dynamics. As the control signal $u_t$ is the function of the current state $x_t$, it is sufficient to solve our original optimal control problem \eqref{eq:costLie} by considering the following deterministic and discrete version,
\begin{equation} \label{eq:cost}
    \begin{aligned}
	 \min_{u_t} & \sum_{i=0}^{N-1} \frac{1}{2\sigma_B^2} \Vert h_i \Vert_2^2 + \frac{1}{2} \Vert u_i \Vert_2^2 - \frac{1}{\sigma_B^2} h_i ^{\prime}(Y_{i+1}-Y_i) / \Delta t \\
	& g_{i+1} = g_{i} \exp (\Delta t\xi_i),\\
    & \xi_{i+1}^{\vee}  = \xi_i^{\vee} +\mathbb{M}^{-1}(\mathbb{M}\xi_t^{\vee} \times \xi_t^{\vee} + \mathbb{H}\sigma u_t)\Delta t  \\
    & + \mathbb{M}^{-1}\mathbb{H}\sigma \sqrt{\Delta t}\epsilon_i.
	\end{aligned}
\end{equation}
Here, our ground truth data $Y_i$ is generated by simulation. In the following experiments, we set $\mathbb{M}= {\rm diag}([1,1.11,1.3])$, $\mathbb{H}=\mathbb{I}_3$, $\sigma=1$, $\sigma_B=0.1$,
$\Delta t=0.005$, $N=200$, $K=100$, $\bar{\gamma}=0.1$, $r_g=[0,0,1]$, $r_b=[1/\sqrt{2},0,1/\sqrt{2}]$,
.

The method we adopt to solve this optimal control problem is based on Differential Dynamic Programming over Lie groups (LDDP) proposed in \cite{BouThe18a}. As our framework does not require the exact optimal control signal, we only utilize the linear approximation of the transition function in this paper, which is the iterative linear quadratic regulator (iLQR). We denote this algorithm by PF-iLQR that means the suboptimal solution of \eqref{eq:cost} is given by iLQR. As for the baseline algorithm, the suboptimal control signal is set as zero control, which we denote by PF-ZERO. Note that if $H=1$, then PF-ZERO reduces to the standard Sequential Importance Resampling (SIR) algorithm (See Remark 1 in \cite{ZhaTagChe21}). We measure the quality of the two algorithms with two criteria:  the effective ratio and the error with respect to the simulated ground truth data. 
\begin{figure}
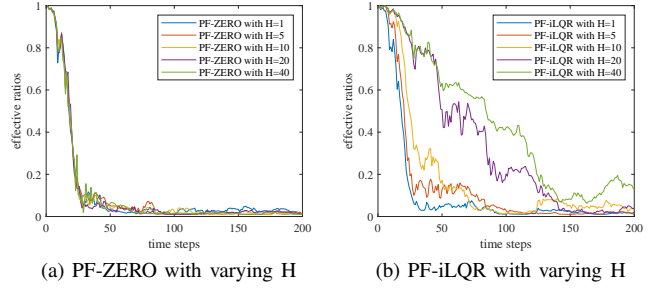

\label{fig:figure1}
\centering
\subfloat[PF-ZERO with varying H ]{\includegraphics[width=0.248\textwidth]{figures/untitled_2.eps}\label{fig:sub1}}
\subfloat[PF-iLQR with varying H]{\includegraphics[width=0.248\textwidth]{figures/untitled_1.eps}\label{fig:sub2}}
\caption{Comparison of Effective Ratios}
\label{fig:figure1}
\end{figure}

The comparison for the effective ratio is depicted in Figure \ref{fig:figure1}. Clearly, PF-iLQR significantly outperforms PF-ZERO, as the suboptimal signal generated by iLQR would control the evolution of particles to approximate the posterior distribution. We also compare the effects of different $H$ and observe increasing the value of $H$ would in general improve the effective ratios of PF-iLQR. However, very large $H$ may deteriorate the performance. In Table~\ref{table2}, PF-iLQR with $H=200$ that represents using all available data shows slightly worse results than PF-iLQR with $H=40$. Similar result has also been observed in~\cite{ZhaTagChe21}. For PF-ZERO, we observe its performance is nearly irrelevant with H.

In addition to the effective ratios, we also test the errors with respect to the simulated data. For the Lie algebra, $\mathfrak{so}(3)$, we compare the mean squared errors. 
While for SO(3), we follow the procedure used in~\cite{ZhaTagMeh17}, which means computing the rotation angle error  $\delta_{\theta} \in [0,180^{\circ}]$ by the quaternion representation of rotations. More specifically, Let $q^{\star}$ be the unit quaternion of ground-true rotation and $\bar{q}$ be the weighted average of quaternions. Then $\delta_{\theta} = 2\arccos(|\delta q^0|)$ where $\delta q^0$ is the first component of $(\bar{q})^{-1} \times q^{\star}$. In each trial, the error is averaged over time, and we repeat each algorithm for 20 trials when $H$ is larger than 1. The number of repeated trials is 50 for $H=1$ to reduce the impact of randomness.
For completeness, we also compare their performance with resampling steps in Table \ref{table2.5}. Indeed, the improvement with resampling steps is not as significant as the case w/o resampling, as PF-ZERO executes more resampling steps than PF-iLQR. It is worth noting that even in this case, PF-iLQR outperforms PF-ZERO.

\begin{table}[h]
\caption{Comparison of Errors w/o resampling}
\label{table2}
\begin{center}
\begin{tabular}{|c|c|c|}
\hline
 & MSE of $\mathfrak{so}(3)$ & Errors of SO(3) \\
\hline
PF-ZERO with H=1  & 0.5182& 9.0206 \\
\hline
PF-ZERO with H=20 & 0.5230& 8.4491\\
\hline
PF-iLQR with H=1& 0.4710 & 8.8710\\
\hline
PF-iLQR with H=5& 0.3835 & 7.4118\\
\hline
PF-iLQR with H=10& 0.3210 &6.9799\\
\hline
PF-iLQR with H=20& 0.2918 &6.4658\\
\hline
PF-iLQR with H=40& 0.2606 &6.3510\\
\hline
PF-iLQR with H=200& 0.2548 &6.3976\\
\hline
\end{tabular}
\end{center}
\end{table}

\begin{table}[h]
\caption{Comparison of Errors with resampling}
\label{table2.5}
\begin{center}
\begin{tabular}{|c|c|c|}
\hline
 & MSE of $\mathfrak{so}(3)$ & Errors of SO(3)\\
\hline
PF-ZERO with H=1  &0.2934 &7.3829 \\
\hline
PF-ZERO with H=20& 0.2801& 7.0383\\
\hline
PF-iLQR with H=1  &0.2924 &7.6737 \\
\hline
PF-iLQR with H=5& 0.2721 & 6.4915\\
\hline
PF-iLQR with H=10& 0.2799 &6.5156\\
\hline
PF-iLQR with H=20& 0.2648 &6.3164\\
\hline
PF-iLQR with H=40& 0.2544 &6.3311\\
\hline
PF-iLQR with H=200& 0.2533 &6.4026\\
\hline
\end{tabular}
\end{center}
\end{table}

\section{CONCLUSIONS}

In this paper, we generalized the duality between smoothing and optimal control to Lie groups, and building on this duality we developed a particle filtering algorithm for dynamical systems over Lie groups. Our algorithm uses iLQR on Lie groups to approximately solve the dual optimal control problem. The resulting PF-iLQR algorithm significantly improves the effective ratios and thus the overall performance of particle filtering algorithms.

{
\bibliography{./refs}
\bibliographystyle{IEEEtran}
}
\addtolength{\textheight}{-12cm}   

\end{document}